\newtheorem{theorem}{Theorem}
\newtheorem{lemma}{Lemma}
\newtheorem{cor}{Corollary}
\newtheorem{remark}[theorem]{Remark}
\newtheorem{example}[theorem]{Example}
\providecommand{\customgenericname}{}
\newcommand{\newcustomtheorem}[2]{%
  \newenvironment{#1}[1]
  {%
   \renewcommand\customgenericname{#2}%
   \renewcommand\theinnercustomgeneric{##1}%
   \innercustomgeneric
  }
  {\endinnercustomgeneric}
}
\title[Algebraic integers and palindromes]{Algebraic integers with continued fraction expansions containing palindromes and square roots with prescribed periods}
\author[S. Barbero]{Stefano Barbero} 
\address{DISMA ``Luigi Lagrange'', Politecnico di Torino, Corso Duca degli Abruzzi 24, 10129
Torino, Italy}
\email{stefano.barbero@polito.it}
\author[U. Cerruti]{Umberto Cerruti} 
\address{Dipartimento di Matematica ``G. Peano'', Università di Torino, Via Carlo Alberto 10, 10123, Torino, Italy}
\email{umberto.cerruti@unito.it}
\author[N. Murru]{Nadir Murru} 
\address{Dipartimento di Matematica, Università di Trento, Via Sommarive 14, 38123, Povo (TN), Italy}
\email{nadir.murru@unitn.it}
\author[G. Salvatori]{Giulia Salvatori} 
\address{DISMA ``Luigi Lagrange'', Politecnico di Torino, Corso Duca degli Abruzzi 24, 10129
Torino, Italy}
\email{giulia.salvatori@polito.it}
\date{}
\subjclass[2020]{11A55} 
\begin{document}

\begin{abstract}
We present a characterization of the algebraic integers with continued fraction expansions of the form $[a_0, \overline{a_1, \ldots, a_n, s}]$, where $(a_1, \ldots, a_n)$ is a palindrome and $s \in \mathbb{N}_{\geq 1}$.
In particular, we focus on the special case where $(a_1, \ldots, a_n) = (m, \ldots, m)$, providing a detailed characterizations of the corresponding algebraic integers and $s$ in terms of Fibonacci polynomials. 
Then, we derive new expansions of square roots of integers with these periods, given $m$ and $n$. Moreover, we explicitly determine the fundamental solutions of both positive and negative Pell's equations corresponding to this family of integers.
\end{abstract}

\maketitle

\section{Introduction}

Given $D \in \mathbb N$ not a square, it is well known that the continued fraction expansion of $\sqrt{D}$ is
\begin{equation}\label{eq:sD}
\sqrt{D} = [a_0, \overline{a_1, \ldots, a_n, 2a_0}] 
\end{equation}
where $(a_1, \ldots, a_n)$ is a palindrome. Given any positive integer $n$, there exist infinitely many integers $D$ such that $\sqrt{D}$ has a periodic expansion with period of length $n+1$.

Friesen \cite{FR88} proved a more general result, providing a sufficient and necessary condition on the palindromic sequence $(a_1, \ldots, a_n)$ in order that there exists $\sqrt{D}$ having the expansion \eqref{eq:sD} and in this case there are infinitely many of such square roots of integers. Mc Laughlin \cite{LA03} provided an analogue characterization and used these results for addressing also the problem of solving the corresponding Pell's equation with multivariate polynomials. 

Many other authors studied similar problems involving the length of the period of $\sqrt{D}$, the shape of the palindromic sequence and the construction of infinitely many $\sqrt{D}$ with a prescribed period. For instance, Pletser \cite{Ple14} focused on the special case where $(a_1, \ldots, a_n) = (m, \ldots, m)$, for any positive integers $m$, $n$ and studied $D \in \mathbb N$ such that $\sqrt{D} = [a_0, \overline{m, \ldots, m, 2a_0}]$, obtaining $D$ in terms of Fibonacci polynomials. Then he exploited this result for writing explicit expressions of the fundamental solutions of the associated Pell's equation. Das et al. \cite{Das20} studied the expansion of $\sqrt{pq}$, with $p,q$ primes, giving some information on the length of the period and on the parity of the central term in the palindromic sequence. Gawron and Kobos \cite{GK23} proved that there exist infinitely many $k \in \mathbb N$ such that there exist infinitely many $n$ for which the length of the period of $n \sqrt{D}$ is $k$. Rada and Starosta \cite{RS20} found upper and lower bounds for the length of the period of the Mobius transform of $\sqrt{D}$, while Kala and Miska \cite{KM23} found an upper bound when $D$ factorizes in some family of polynomials with integral coefficients. Moreover, they proved that for each positive integer $a$ there exist only finitely many prime numbers $p$ such that $a$ appears an odd number of times in the period of continued fraction of $\sqrt{p}$ or $\sqrt{2p}$, with $p$ prime. Further results can be found in \cite{Koch, Kaw1, Mollin03b, Tom}.

The paper is structured as follows.
In Section \ref{sec:intp}, we give a characterization of all quadratic algebraic integers having continued fraction expansion $[a_0, \overline{a_1, \ldots, a_n, s}]$ where $(a_1, \ldots, a_n)$ is a palindrome and $s \in \mathbb N_{\geq1}$, in Theorem \ref{palindrome} and Corollary \ref{coro}. We also provide an explicit method for finding $s$ and determining the corresponding algebraic integer. This result is not new (since it can be derived from \cite[Theorem~3.1]{Kaw2}, \cite{FR88} and \cite{LA03}), however, we include it alongside our proof (slightly different from the approach that exploits \cite[Theorem~3.1]{Kaw2}) as it constitutes a fundamental component in the subsequent findings. 

In Section \ref{sec:m}, in Theorem \ref{mmm}, we specify this study to the special case $(a_1, \ldots, a_n) = (m, \ldots, m)$, describing the minimal polynomial of such algebraic integers in terms of Fibonacci polynomials. 
Moreover, we find the explicit continued fraction expansion of a infinite family of square roots of integers (Theorems \ref{thm:mne} and \ref{thm:meno}, Remark \ref{rem:rn}), derived from algebraic integers of Theorem \ref{mmm}. 

In Section \ref{sec:pell} we provide the explicit forms for the fundamental solutions of both positive and negative Pell's equations corresponding to these integers.

\section{Algebraic integers with palindromic sequences in their continued fraction expansion} \label{sec:intp}

In the following theorem, we prove that for any palindromic sequence there exist infinitely many algebraic integers containing such sequence in the period of the continued fraction expansion. 

\begin{remark}\emph{
We point out that Theorem \ref{palindrome} is readily deducible from \cite[Theorem~3.1]{Kaw2}. However, we provide our own proof as it explicitly provides the continued fraction expansion and the minimal polynomial of the algebraic integers in the interval $(0,1)$ so that they are effectively given. These results are particularly pertinent to Theorem \ref{mmm}, where we examine the specific case of a palindrome sequence composed of constant elements.}
\end{remark}

\begin{theorem} \label{palindrome}	
Given a finite palindromic sequence $(a_1,a_2,\ldots,a_n)$, with $n \geq 1$, there exist infinitely many algebraic integers $\alpha$ of degree $2$ such that $0<\alpha<1$ and
$$\alpha=[0,\overline{a_1, a_2, \ldots, a_n, w + z k}] $$
for all $k \geq k_1$, where $k_1$ is a constant depending on the sequence $(a_1,a_2,\ldots,a_n)$ and $w, z$ are constants depending on the sequence $(a_1,a_2,\ldots,a_n)$ and $k$.
\end{theorem}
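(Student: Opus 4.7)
The plan is to translate the continued fraction identity $\alpha=[0,\overline{a_1,\ldots,a_n,s}]$ into a quadratic equation in $\alpha$, exploit the palindrome symmetry to simplify its coefficients, and then impose integrality of the leading coefficient via a linear congruence on $s$.

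Let $p_k/q_k$ denote the convergents of $[a_1,\ldots,a_n]$, so that
\[
\begin{pmatrix} a_1 & 1 \\ 1 & 0 \end{pmatrix}\cdots\begin{pmatrix} a_n & 1 \\ 1 & 0 \end{pmatrix} = \begin{pmatrix} p_n & p_{n-1} \\ q_n & q_{n-1} \end{pmatrix}.
\]
The reversal identity $[a_n,\ldots,a_1]=p_n/p_{n-1}$, combined with the palindrome assumption $(a_1,\ldots,a_n)=(a_n,\ldots,a_1)$, forces $p_{n-1}=q_n$. Writing $\gamma=1/\alpha=[\overline{a_1,\ldots,a_n,s}]$ and using periodicity to rewrite $\gamma=[a_1,\ldots,a_n,s,\gamma]$, I right-multiply by the matrix associated with the partial quotient $s$, clear denominators, and substitute $\gamma=1/\alpha$. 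The palindrome identity $p_{n-1}=q_n$ then partially cancels the linear term, leaving
\[
p_n\alpha^2 + p_n s\,\alpha - (q_n s + q_{n-1}) = 0.
\]

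Since $\alpha$ is irrational (the expansion is non-terminating), this is, up to the factor $p_n$, the minimal polynomial of $\alpha$; thus $\alpha$ is a degree-$2$ algebraic integer precisely when $p_n\mid q_n s + q_{n-1}$. From $p_nq_{n-1}-p_{n-1}q_n=(-1)^n$ together with $p_{n-1}=q_n$, one obtains $\gcd(p_n,q_n)=1$, so the congruence $q_n s\equiv -q_{n-1}\pmod{p_n}$ has a unique residue class $s\equiv w\pmod{p_n}$. Setting $z=p_n$, the admissible values of $s$ are exactly $s=w+zk$ for $k\in\mathbb{N}$, producing the infinite family claimed.

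It remains to show that $0<\alpha<1$ and that this $\alpha$ actually has the prescribed expansion. The constant term $-(q_ns+q_{n-1})/p_n$ is negative, so the two roots have opposite signs; an elementary estimate shows the positive root lies in $(0,1)$ whenever $s$ exceeds a threshold depending only on the palindrome, and this threshold determines $k_1$. Finally, since all $a_i,s\ge 1$ and $\alpha\in(0,1)$ satisfies the self-similarity equation derived above, uniqueness of the continued fraction expansion of a number in $(0,1)$ identifies its expansion as $[0,\overline{a_1,\ldots,a_n,s}]$. The main obstacle I anticipate is the simplification step: without the palindrome identity $p_{n-1}=q_n$, the monic-integrality requirement on $\alpha$'s polynomial would split into two divisibility conditions instead of one, and generically would admit no solution in $s$; the palindrome assumption is precisely what collapses these into a single solvable congruence.
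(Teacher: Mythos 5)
Your proposal is correct and follows essentially the same route as the paper: you derive the quadratic $A_n\alpha^2 + sA_n\alpha - (sB_n + B_{n-1}) = 0$ via the convergent matrices, use the palindrome identity $A_{n-1}=B_n$ to cancel the extra linear term, and reduce integrality to the single congruence $s \equiv w \pmod{A_n}$ solvable because $\gcd(A_n,B_n)=1$. The only cosmetic difference is that the paper solves the congruence explicitly, giving $w = (-1)^nA_{n-1}B_{n-1}$ and $z = A_n$, and observes that the threshold $k_1$ comes simply from requiring $s>0$.
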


\begin{proof}
Let us consider the finite sequences
\[ (A_0, A_1, \ldots, A_n), \quad (B_0, B_1, \ldots, B_n) \]
defined by
\[ A_0 = 1, \quad A_1 = a_1, \quad B_0 = 0, \quad B_1 = 1 \]
and
\[ A_h = a_h A_{h-1} + A_{h-2}, \quad B_h = a_h B_{h-1} + B_{h-2} \]
for all $2 \leq h \leq n$.
Remembering that
\[ \begin{pmatrix} a_1 & 1 \cr 1 & 0 \end{pmatrix} \cdots \begin{pmatrix} a_h & 1 \cr 1 & 0 \end{pmatrix} = \begin{pmatrix} A_h & A_{h-1} \cr B_h & B_{h-1} \end{pmatrix}  \]
since the sequence $(a_1,a_2,\ldots,a_n)$ is a palindrome, we have $A_{n-1} = B_n$.


Considering now $\alpha = [0, \overline{a_1, \ldots, a_n, s}]$, we have $\alpha = [0, a_1, \ldots, a_n, s + \alpha]$ from which 
\[ \alpha = \cfrac{(s+\alpha) B_n + B_{n-1}}{(s+\alpha) A_n + A_{n-1}} \]
and then
\[ A_n \alpha^2 + s A_n \alpha - s B_n - B_{n-1} = 0. \]
Thus, $\alpha$ is an algebraic integer if and only if $-s B_n - B_{n-1} \equiv 0 \pmod{A_n}$. Since $\gcd(A_n, A_{n-1}) = 1$ and $A_n B_{n-1} - B_n A_{n-1} = (-1)^n$, the previous congruence is equivalent to
\[ (-1)^n s \equiv A_{n-1} B_{n-1} \pmod{A_n}. \]
Thus, considering 
\[ s = (-1)^n A_{n-1} B_{n-1} + k A_n, \]
with $k \in \mathbb Z$, we also have
\[ -s B_n - B_{n-1} = (-1)^{n-1} A_{n-1} B_{n-1}B_n - k A_nB_n - B_{n-1} = A_n ((-1)^{n-1} B_{n-1}^2 - k B_n), \]
from which the minimal polynomial of $\alpha$ is 
$$x^2 + sx + t,$$ 
where
\[ t = (-1)^{n-1} B_{n-1}^2 - k B_n.\]
Clearly, in order to have $\alpha = [0, \overline{a_1,\ldots,a_n,s}]$ we must require $s > 0$ and this is verified if $k \geq k_1$, where
\[k_1 = \left \lceil \cfrac{(-1)^{n-1}A_{n-1}B_{n-1}}{A_n} \right \rceil\]
\end{proof}

The following corollary completes Theorem \ref{palindrome} for the case $\alpha \notin (0,1)$.
\begin{cor}\label{coro}
Given a finite palindromic sequence $(a_1,a_2,\ldots,a_n)$ and $a_0 \in \mathbb{Z}$, with $n \geq 1$, there exist infinitely many algebraic integers $\alpha$ of degree $2$ such that 
$$\alpha=[a_0,\overline{a_1, a_2, \ldots, a_n, w + z k}], $$
where $k$, $w$ and $z$, which do not depend on $a_0$, are the constants given by Theorem \ref{palindrome} associated with $\{\alpha \}= [0,\overline{a_1, a_2, \ldots, a_n, w + z k}]$, the fractional part of $\alpha$.
\end{cor}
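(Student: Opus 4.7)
The plan is to reduce the general case to the case $0 < \alpha < 1$ already handled by Theorem \ref{palindrome} via the trivial observation that a continued fraction whose integer part is $a_0 \in \mathbb{Z}$ and whose remaining digits are $[\overline{a_1,a_2,\ldots,a_n,s}]$ represents the real number $a_0 + \{\alpha\}$, where $\{\alpha\} = [0,\overline{a_1,a_2,\ldots,a_n,s}] \in (0,1)$. So I would begin by fixing a palindrome $(a_1,\ldots,a_n)$ and an integer $a_0$, and setting $\beta := \{\alpha\}$. The identity $\alpha = a_0 + \beta$ then reduces the question to: whenever $\beta$ is a quadratic algebraic integer with expansion $[0,\overline{a_1,\ldots,a_n,s}]$, the real number $a_0 + \beta$ is still a quadratic algebraic integer.

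Next, I would apply Theorem \ref{palindrome} directly to the palindrome $(a_1,\ldots,a_n)$ to produce, for each $k \geq k_1$, infinitely many quadratic algebraic integers $\beta_k \in (0,1)$ with the prescribed expansion, where $s = w+zk$ and $w,z,k_1$ are the constants constructed in that theorem (note that they are extracted from the palindrome only and are independent of $a_0$). By that theorem, the minimal polynomial of $\beta_k$ is $x^2 + s x + t$ with $t = (-1)^{n-1} B_{n-1}^2 - k B_n \in \mathbb{Z}$.

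The key step is then to transfer integrality from $\beta$ to $\alpha = a_0 + \beta$. Substituting $\beta = \alpha - a_0$ into $\beta^2 + s\beta + t = 0$ yields
\[
\alpha^2 + (s - 2a_0)\alpha + (a_0^2 - s a_0 + t) = 0,
\]
which is a monic polynomial with integer coefficients; since $\alpha \notin \mathbb{Q}$ (because $\beta$ has degree $2$ over $\mathbb{Q}$), this is precisely the minimal polynomial of $\alpha$, showing that $\alpha$ is a quadratic algebraic integer. Varying $k \geq k_1$ produces infinitely many such $\alpha$, and because the parameters $w$, $z$, $k_1$ depend only on $(a_1,\ldots,a_n)$, they indeed do not depend on $a_0$.

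I do not foresee a real obstacle here: the statement is essentially a translation invariance observation, and the only thing to verify is that $a_0 + \beta$ has the claimed continued fraction expansion, which follows from $\beta \in (0,1)$ and $a_0 \in \mathbb{Z}$ by the standard uniqueness of the continued fraction expansion of an irrational number.
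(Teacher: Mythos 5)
Your proposal is correct and follows essentially the same route as the paper: both reduce to Theorem \ref{palindrome} via $\alpha = a_0 + \{\alpha\}$ and obtain the minimal polynomial of $\alpha$ by the substitution $x \mapsto x - a_0$, yielding $x^2+(s-2a_0)x+a_0^2-sa_0+t$. The only cosmetic difference is that the paper phrases the integrality transfer via the fact that the ring of integers contains $\mathbb{Z}$, while you verify it directly from the monic integer polynomial, which amounts to the same computation.
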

\begin{proof}
The ring of integers of a quadratic number field is an ring containing $\mathbb{Z}$. This implies that $\alpha$ is an algebraic integer if and only if its fractional part $\{\alpha\}=\alpha - a_0 = [0,\overline{a_1, a_2, \ldots, a_n, s}]$ is an algebraic integer. The proof follows from Theorem \ref{palindrome}. We point out the fact that, if $x^2 + sx + t$ is the minimal polynomial of $\{\alpha\}$, with $s$ and $t$ given in the proof of Theorem \ref{palindrome}, then $(x-a_0)^2 + s(x-a_0) + t = x^2+(s-2a_0)x+a_0^2-sa_0+t$ is the minimal polynomial of $\alpha$. 
\end{proof}

Clearly, given $a_0 \in \mathbb{Z}$, a palindromic sequence $(a_1, \ldots, a_n)$ and an integer $s$, there exists always a quadratic irrational $\alpha$ such that $\alpha = [a_0, \overline{a_1, \ldots, a_n,s}]$, but in general $\alpha$ is not an algebraic integer. Theorem \ref{palindrome} and Corollary \ref{coro} provide a method for finding all and only the positive integers $s$ such that $\alpha = [a_0, \overline{a_1, \ldots, a_n,s}]$ is an algebraic integer. 

\begin{remark}\emph{
Thanks to Theorem \ref{palindrome} and Corollary \ref{coro} it is possible to give sufficient and necessary conditions on the palindromic sequence $(a_1, \ldots, a_n)$ in order that there exists $\sqrt{D}$ having the expansion \eqref{eq:sD}, which are equivalent to those found by Friesen in \cite{FR88}. 
There exists $D$ such that $\sqrt{D}=[a_0,\overline{a_1, a_2, \ldots, a_n,2a_0}]$ if and only if
\begin{equation}\label{condnecsuff}2a_0 \equiv (-1)^n A_{n-1} B_{n-1} \pmod{A_n}.\end{equation}
If $A_n \equiv 1 \pmod{2}$ then it is always possible to find values of $a_0$ satisfying \eqref{condnecsuff}. If $A_n \equiv 0 \pmod{2}$, then $A_{n-1} \equiv 1 \pmod{2}$ and so the condition of the existence of $a_0$s is equivalent to $B_{n-1} \equiv 0 \pmod{2}$. Therefore, if $B_{n-1} \equiv 0 \pmod{2}$ it is always possible to find such $a_0$s. If $B_{n-1} \equiv 1 \pmod{2}$ there exist such $a_0$s if and only if $A_n \equiv 1 \pmod{2}$. In this case, $A_n \equiv 1+B_n^2 \pmod{2}$, and so $A_n \equiv 1 \pmod{2}$ if and only if $B_n \equiv 0 \pmod{2}$. In conclusion, given a palindromic sequence $(a_1, \ldots, a_n)$, there exists $D$ such that $\sqrt{D}=[a_0,\overline{a_1, a_2, \ldots, a_n,2a_0}]$ if and only if one of the following two conditions holds
\begin{enumerate}
    \item $B_{n-1} \equiv 0 \pmod{2}$;
    \item $B_{n-1} \equiv 1 \pmod{2}$ and $B_{n} \equiv 0 \pmod{2}$.
\end{enumerate}
}
\end{remark}

\begin{remark}\emph{
Depending on the chosen value of $k$ and consequently $s$, the period of the continued fraction expansion of $\alpha = [a_0, \overline{a_1, \ldots, a_n,s}]$ can be smaller than $n+1$, as shown in Example \ref{exampleperiod}. Choosing $s = (-1)^n A_{n-1} B_{n-1} + k A_n$, using the same notation of Theorem \ref{palindrome}, with $k$ such that $s > \max \{a_1, \ldots, a_n \}$, we have that the continued fraction expansion of $\alpha$ has period $n+1$.}
\end{remark}

\begin{example}\emph{
Consider the palindromic sequence $(2,5,5,2)$. The sequences $(A_n)_{n=0}^4$ and $(B_n)_{n=0}^4$ defined in the proof of Theorem \ref{palindrome} are
\[ (1, 2, 11, 57, 125), \quad (0, 1, 5, 26, 57), \]
respectively. Thus, considering 
\[ k \geq \left\lceil \cfrac{(-1)^3 A_{3}B_3}{A_4} \right \rceil = -11\]
we get infinitely many algebraic integers with expansion $[0, \overline{2,5,5,2, s}]$. For instance, considering $k = -11$, we obtain
\[ s = (-1)^4A_3B_3-11A_4 = 107, \quad t = (-1)^3 B_3^2+11B_4 = -49, \]
i.e., $\alpha = [0, \overline{2,5,5,2, 107}]$ where $\alpha = \frac{-107+\sqrt{11645}}{2}$ is the algebraic integer with minimal polynomial $x^2 + 107x - 49$ and $0 < \alpha < 1$. Choosing $s=74$, then $(-1)^4s \not\equiv A_{3}B_{3} \pmod{A_4}$ and $\alpha = [0, \overline{2,5,5,2, 74}] = \frac{-925 + \sqrt{876845}}{25}$ is not an algebraic integer.
}
\end{example}

\begin{example}\emph{Let $a$ and $b$ two positive distinct integers, we define the sequences $S_n$, for $n\ge0$ as follows:
\[
\begin{cases}S_0 = (a) \\
S_1 = (a,b) \\
S_{n} = S_{n-1} \| S_{n-2}, \text{ for }n \ge 2
\end{cases}
  \]
where $\|$ denotes the string concatenation. These sequences are used to define the Fibonacci word (see \cite{Shall}). It is easy to prove that, for $n\ge3$ the following are palindromic sequences
\[\begin{cases}\hat{S}_n = (a,b) \| S_n, \quad \text{if }n \equiv 0 \pmod{2} \\ 
\hat{S}_n = (b,a) \| S_n, \quad \text{if }n \equiv 1 \pmod{2}.
\end{cases}\]
We have that $|\hat{S}_n| = F_{n+2} +2$, where $F_n$ is the $n$-th element of the Fibonacci sequence.
For $n=3$, $\hat{S}_3 = (b,a,a,b,a,a,b)$ is a palindromic sequence of $F_{5} +2 =7$ elements. The sequences $(A_n)_{n=0}^7$ and $(B_n)_{n=0}^7$ defined in the proof of Theorem \ref{palindrome} are
\[ \begin{aligned}(&1, b, ab+1, a^2b+a+b, a^2b^2+2ab+b^2+1, a^3b^2+3a^2b+ab^2+2a+b, \\ &a^4b^2+3a^3b+2a^2b^2+3ab+2a^2+b^2+1,\\ &a^4b^3+4a^3b^2+2a^2b^3+4ab^2+5a^2b+b^3+2a+2b),\end{aligned}\]
and
\[\begin{aligned}(&0,1,a,a^2+1, a^2b+a+b, a^3b+2a^2+ab+1, a^4b+2a^3+2a^2b+2a+b,\\ &a^4b^2+3a^3b+2a^2b^2+3ab+2a^2+b^2+1),\end{aligned}\]
respectively. Thus, considering 
\[ k \geq k_1= \left\lceil \cfrac{(-1)^6 A_{6}B_6}{A_7} \right \rceil\]
we get infinitely many algebraic integers with expansion $[0, \overline{b,a,a,b,a,a,b,s}]$, and minimal polynomial $x^2+sx+t$, where
\[s=(-1)^7A_6B_6 + kA_7 \quad \text{and} \quad t=(-1)^6B_6^2-kB_7.\]
For example, setting $a=1$ and $b=2$, we obtain $k\ge5$. In the case $k=5$ we obtain $s=28$, $t=-724$ and $\alpha=[0,\overline{2,1,1,2,1,1,2,28}]=-14+\sqrt{207}$, with minimal polynomial equal to $x^2+28x-724$.
}
\end{example}

\begin{example}\emph{
Consider the palindromic sequences $(1,1,2m,1,1)$, for $m \in \mathbb N$ not zero. The sequences $(A_n)_{n=0}^5$ and $(B_n)_{n=0}^5$ defined in the proof of Theorem \ref{palindrome} are
\[ (1,1,2,4m+1,4m+3,8m+4), \quad (0,1,1,2m+1,2m+2,4m+3) \]
and 
\[k_1 = \left\lceil \frac{(m+1)(4m+3)}{4m+2} \right\rceil  = m +2.\]
Indeed, 
\[ (m+1)(4m+3) = (m+1)(4m+2) + m + 1 \]
 and since $0 < m+1 < 4m+2$ we have 
 \[ \left\lfloor \frac{(m+1)(4m+3)}{4m+2} \right\rfloor = m + 1. \]
 Thus, taking $k = k_1$, we have \(s = 6m+2\) and we obtain 
\[ -3m-1 + \sqrt{9m^2+9m+3} = [0,\overline{1,1,2m,1,1,6m+2}] \]
whose minimal polynomial is $x^2+(6m+2) x - 3m - 2$. For instance, we have the following expansion of algebraic integers:
\[ -4 + \sqrt{21} = [0,\overline{1,1,2,1,1,8}], \quad -7 + \sqrt{57} = [0,\overline{1,1,4,1,1,14}],\]
\[ -13 + \sqrt{183} = [0,\overline{1,1,6,1,1,20}], \quad -16 + \sqrt{273} = [0,\overline{1,1,8,1,1,26}],\]
and so on.
}
\end{example}

\begin{example}\label{exampleperiod}\emph{
Consider the palindromic sequences $(1,1,2m+1,1,1)$, for $m \in \mathbb N$. The sequences $(A_n)_{n=0}^5$ and $(B_n)_{n=0}^5$ defined in the proof of Theorem \ref{palindrome} are
\[ (1,1,2,4m+3,4m+5,8m+8), \quad (0,1,1,2m+2,2m+3,4m+5) \]
and 
\[k_1 = \left\lceil \frac{(2m+3)(4m+5)}{8m+8} \right\rceil  = m+2.\]
Indeed, 
\[ (2m+3)(4m+5) = (m+1)(8m+8) + 6m+7 \]
 and since $0 < 6m+7 < 8m+8$ we have 
 \[ \left\lfloor \frac{(2m+3)(4m+5)}{8m+8} \right\rfloor = m + 1. \]
Taking $k = k_1$, we have \(s = 2m+1\) and we obtain 
\[ \alpha = [0,\overline{1,1,2m+1,1,1,2m+1}] = [0,\overline{1,1,2m+1}].\]
}
\end{example}

\section{Algebraic integers with periodic continued fractions having all terms but last equal and expansions of associated square roots} \label{sec:m}

We define the sequence of Fibonacci polynomials $(f_h(m))_{h \geq 0}$ (as defined, e.g., in \cite{Ple14}) by
\[ \begin{cases} f_0(m) = 0, \quad f_1(m) = 1 \cr f_h(m) = m f_{h-1}(m) + f_{h-2}(m), \quad \forall h \geq 2 \end{cases}, \]
where $m$ is a fixed integer. They can be defined for negative indices by
\[f_{-h}(m) = (-1)^{h-1}f_{h}(m), \quad \forall h \geq 1.\]

\begin{theorem} \label{mmm}
Consider the periodic continued fraction $\alpha = [0, \overline{m, \ldots, m, s}]$, whose period has length $n+1$, then there exist infinitely many values of $s$ (depending on $m, n$) such that $\alpha$ is an algebraic integer and its minimal polynomial is $x^2 + sx + t$, where
\[ s= (-1)^n f_n(m)f_{n-1}(m) + k f_{n+1}(m), \quad t = (-1)^{n-1} f^2_{n-1}(m) - k f_n(m) \]
for all $k \geq (-1)^{n-1} f_{n-2}(m) + 1$.
\end{theorem}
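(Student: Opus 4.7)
The plan is to specialize Theorem \ref{palindrome} to the constant palindromic sequence $(a_1,\ldots,a_n)=(m,m,\ldots,m)$ and rewrite every quantity appearing in it in terms of Fibonacci polynomials. With $a_h=m$ for every $h$, the recurrences $A_h=mA_{h-1}+A_{h-2}$ and $B_h=mB_{h-1}+B_{h-2}$ used in the proof of Theorem \ref{palindrome} coincide with the Fibonacci polynomial recurrence; matching initial conditions via a one-line induction gives $A_h=f_{h+1}(m)$ and $B_h=f_h(m)$ for all $h\geq 0$. Substituting these identities into the expressions for $s$ and $t$ supplied by Theorem \ref{palindrome} directly produces $s=(-1)^n f_n(m)f_{n-1}(m)+k\,f_{n+1}(m)$ and $t=(-1)^{n-1}f_{n-1}^2(m)-k\,f_n(m)$, and $\alpha$ is then an algebraic integer with minimal polynomial $x^2+sx+t$ for every such $k$.

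The only delicate point is to convert the bound $k\geq k_1=\bigl\lceil (-1)^{n-1}f_n(m)f_{n-1}(m)/f_{n+1}(m)\bigr\rceil$ from Theorem \ref{palindrome} into the closed form $k\geq (-1)^{n-1}f_{n-2}(m)+1$. The key tool will be the identity
\[
f_n(m)\,f_{n-1}(m)-f_{n+1}(m)\,f_{n-2}(m)=(-1)^n m,
\]
which follows from the Cassini-type relation $f_{n+1}f_{n-1}-f_n^2=(-1)^n$ combined with $f_{n-2}=f_n-mf_{n-1}$; I would prove it by writing $f_{n+1}f_{n-2}=(mf_n+f_{n-1})(f_n-mf_{n-1})$ and grouping terms. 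Dividing the identity by $f_{n+1}(m)$ and observing that $0<m/f_{n+1}(m)<1$ for $n\geq 1$ and $m\geq 1$, the ceiling evaluates to $k_1=(-1)^{n-1}f_{n-2}(m)$, independently of the parity of $n$.

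It remains to explain the extra $+1$. A short computation based on the same identity shows that choosing $k=k_1$ yields $s=m$, in which case the sequence $(m,\ldots,m,s)$ becomes $n+1$ copies of $m$ and the continued fraction collapses to $[0,\overline{m}]$, whose period length is $1$ rather than $n+1$, contradicting the hypothesis. The first admissible value is therefore $k=(-1)^{n-1}f_{n-2}(m)+1$, for which the identity gives $s=m+f_{n+1}(m)$; since $s>m=\max\{a_1,\ldots,a_n\}$, the period of $\alpha$ is genuinely $n+1$, as required by the statement. Every larger $k$ increases $s$ by $f_{n+1}(m)$, producing infinitely many distinct admissible values. The main obstacle is thus proving the Fibonacci polynomial identity cleanly and handling the two parities uniformly; once that identity is in hand, the bound and the period-length condition fall out together without a case split.
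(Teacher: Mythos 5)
Your proposal is correct and follows essentially the same route as the paper: specialize Theorem \ref{palindrome} via $A_h=f_{h+1}(m)$, $B_h=f_h(m)$, use the identity $f_n(m)f_{n-1}(m)-f_{n+1}(m)f_{n-2}(m)=(-1)^n m$ (which the paper derives from the addition formula $f_{j-1}f_i+f_jf_{i+1}=f_{i+j}$ rather than from Cassini, an immaterial difference) to evaluate the ceiling, and observe that $k=(-1)^{n-1}f_{n-2}(m)$ forces $s=m$ and collapses the period. The only cosmetic blemish is the claim $0<m/f_{n+1}(m)<1$ for all $n\geq 1$, which fails at $n=1$ where $f_2(m)=m$; the paper glosses over the same point and the stated bound on $k$ is still correct there by direct inspection.
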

\begin{proof}
Considering the finite continued fraction $[m, \ldots, m]$ and numerators and denominators of its convergents (as defined also in the proof of Theorem \ref{palindrome}) we have $A_h = f_{h+1}(m)$ and $B_h = f_h(m)$, for all $0 \leq h \leq n$. Thus, by Theorem \ref{palindrome}, it follows that $\alpha$ is an algebraic integer whose minimal polynomial is $x^2 + sx + t$, where
\[ s= (-1)^n f_n(m)f_{n-1}(m) + k f_{n+1}(m), \quad t = (-1)^{n-1} f^2_{n-1}(m) - k f_n(m) \]
for $k \in \mathbb Z$. In this case, for obtaining $s > 0$, we exploit that
\[ f_{j-1}(m) f_i(m) + f_j(m) f_{i+1}(m) = f_{i+j}(m) \quad \forall i,j \in \mathbb Z, \]
from which we obtain
\[(-1)^nf_{n-1}(m)f_n(m) +(-1)^{n-1}f_{n-2}(m)f_{n+1}(m) = f_2(m) = m.\] 
Considering
\[ k_0 = \cfrac{(-1)^{n-1}f_n(m)f_{n-1}(m)}{f_{n+1}(m)} = (-1)^{n-1} f_{n-2}(m) - \cfrac{m}{f_{n+1(m)}}, \]
if we take $k = \lceil k_0 \rceil$, we get $s = m$. Thus, we must have $k \geq (-1)^{n-1} f_{n-2}(m) + 1$ in order to have the period length equal to $n+1$.
\end{proof}

In the following, we focus on the continued fraction expansion (and the corresponding Pell's equations) of $\beta(n, m, k) := \sqrt{s(n, m, k)^2 - 4t(n, m, k)}$, where $s$ and $t$ are given as in Theorem \ref{mmm} (where, when necessary, we highlight the dependence on $n, m, k$).

\begin{lemma}[\cite{Ran73}]\label{lemma:RL}
Given the matrices 
\[ R = \begin{pmatrix} 1 & 1 \cr 0 & 1 \end{pmatrix}, \quad L = \begin{pmatrix} 1 & 0 \cr 1 & 1 \end{pmatrix}, \quad J = \begin{pmatrix} 0 & 1 \cr 1 & 0 \end{pmatrix} \]
we have
\[ \begin{pmatrix} a & 1 \cr 1 & 0 \end{pmatrix} = R^aJ = JL^a \]
and
\[ \prod_{i=0}^\infty \begin{pmatrix} a_i & 1 \cr 1 & 0 \end{pmatrix} = \prod_{i=0}^\infty R^{a_{2i}} L^{a_{2i+1}}. \]
\end{lemma}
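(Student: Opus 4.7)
The plan is to verify the matrix identities mechanically; the proof reduces to routine $2 \times 2$ computations once the right bookkeeping is spotted. First I would establish by induction on $a$ that
\[ R^a = \begin{pmatrix} 1 & a \cr 0 & 1 \end{pmatrix}, \quad L^a = \begin{pmatrix} 1 & 0 \cr a & 1 \end{pmatrix}. \]
The base case $a = 0$ is trivial (both give $I$), and the inductive step $R^{a+1} = R^a \cdot R$, $L^{a+1} = L^a \cdot L$ is a single matrix product.

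With these formulas in hand, the first pair of identities follows by direct multiplication:
\[ R^a J = \begin{pmatrix} 1 & a \cr 0 & 1 \end{pmatrix}\begin{pmatrix} 0 & 1 \cr 1 & 0 \end{pmatrix} = \begin{pmatrix} a & 1 \cr 1 & 0 \end{pmatrix}, \]
and symmetrically
\[ J L^a = \begin{pmatrix} 0 & 1 \cr 1 & 0 \end{pmatrix}\begin{pmatrix} 1 & 0 \cr a & 1 \end{pmatrix} = \begin{pmatrix} a & 1 \cr 1 & 0 \end{pmatrix}. \]

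The factorization of the infinite product then rests on a single observation: conjugation by $J$ swaps $R^a$ and $L^a$. Indeed, since $J^2 = I$, one computes explicitly $J R^a J = L^a$. Using the first part to rewrite every factor as $R^{a_i} J$, and then pairing consecutive factors in the product, I obtain
\[ (R^{a_{2i}} J)(R^{a_{2i+1}} J) \;=\; R^{a_{2i}} (J R^{a_{2i+1}} J) \;=\; R^{a_{2i}} L^{a_{2i+1}}, \]
so the whole product telescopes into $\prod_{i=0}^\infty R^{a_{2i}} L^{a_{2i+1}}$, as claimed.

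The only real subtlety is the interpretation of the ``infinite'' matrix product: it does not converge entrywise in the usual sense, and the identity should be read as an equality of finite truncations obtained by grouping the factors in pairs, i.e.\ as an identity of partial products of even length. This is precisely what the pairing argument gives, and it is the form in which the lemma is used in the continued-fraction manipulations of the next section, so no further analytic input is needed.
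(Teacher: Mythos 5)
Your proof is correct. Note that the paper itself offers no proof of this lemma at all: it is quoted directly from Raney's work (the citation \cite{Ran73} in the lemma header), so there is nothing in the text to compare against. Your verification is the standard one and checks out: the closed forms $R^a = \left(\begin{smallmatrix} 1 & a \\ 0 & 1 \end{smallmatrix}\right)$, $L^a = \left(\begin{smallmatrix} 1 & 0 \\ a & 1 \end{smallmatrix}\right)$ follow by induction, the identities $R^aJ = JL^a = \left(\begin{smallmatrix} a & 1 \\ 1 & 0 \end{smallmatrix}\right)$ are one multiplication each, and the pairing $(R^{a_{2i}}J)(R^{a_{2i+1}}J) = R^{a_{2i}}L^{a_{2i+1}}$ via $JR^aJ = L^a$ (using $J^2 = I$) gives the product formula. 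Your closing caveat about how to read the infinite product is also well taken: the identity is one of partial products grouped in pairs (equivalently, both sides encode the same continued fraction expansion), which is exactly the form in which the paper uses it in Theorems \ref{thm:mne} and \ref{thm:meno}, where $M$, $N$, $P$, $Q$ are pushed through finite blocks of $R$'s and $L$'s.
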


\begin{lemma}\label{MRL}
Given the matrices
\[ M = \begin{pmatrix} 2 & 0 \cr 0 & 1 \end{pmatrix}, \quad N = \begin{pmatrix} 1 & 0 \cr 0 & 2 \end{pmatrix}, \quad P = \begin{pmatrix} 2 & 0 \cr 1 & 1 \end{pmatrix}, \quad Q = \begin{pmatrix} 1 & 1 \cr 0 & 2 \end{pmatrix} \]
and any positive integer $e$, we have
\begin{itemize}
\item $MR^e = R^{2e}M$, $NL^e = L^{2e}N$;
\item $PR^e = RLR^{\frac{e-2}{2}}Q$, $QL^e = LRL^{\frac{e-2}{2}}P$, $NR^e = R^{\frac{e}{2}}N$, $ML^e = L^{\frac{e}{2}}M$, if $e$ is even;
\item $PR^e = RLR^\frac{e-1}{2}N$, $QL^e = LRL^{\frac{e-1}{2}}M$, $NR^e = R^{\frac{e-1}{2}}Q$, $ML^e = L^{\frac{e-1}{2}}P$, if $e$ is odd.
\end{itemize}
\end{lemma}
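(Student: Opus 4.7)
The plan is to verify each of the ten identities by direct matrix multiplication, using the explicit closed forms
\[ R^e = \begin{pmatrix} 1 & e \\ 0 & 1 \end{pmatrix}, \qquad L^e = \begin{pmatrix} 1 & 0 \\ e & 1 \end{pmatrix}, \]
which follow by a one-line induction on $e$ (or can simply be read off from the action of $R$ and $L$ on the standard basis). With these closed forms in hand, every identity in the statement reduces to comparing two $2\times 2$ matrices with integer entries depending linearly on $e$, so the verifications are routine.

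To cut the work in half, I would first record the symmetry coming from conjugation by the involution $J = \begin{pmatrix} 0 & 1 \\ 1 & 0 \end{pmatrix}$. A direct check gives $JRJ = L$, $JLJ = R$, $JMJ = N$, $JNJ = M$, $JPJ = Q$, $JQJ = P$. Consequently, conjugating any identity by $J$ turns an $R$-side statement into the corresponding $L$-side statement: for instance, conjugating $MR^e = R^{2e}M$ by $J$ yields $NL^e = L^{2e}N$, and similarly each of $PR^e$, $NR^e$, $ML^e$-identities is paired with one of the $QL^e$, $ML^e$, $NR^e$-identities. It therefore suffices to establish one identity in each pair.

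The remaining identities I would verify individually. The first, $MR^e = R^{2e}M$, is immediate:
\[ MR^e = \begin{pmatrix} 2 & 2e \\ 0 & 1 \end{pmatrix} = R^{2e}M. \]
For $NR^e$, distinguishing parity: when $e$ is even,
\[ NR^e = \begin{pmatrix} 1 & e \\ 0 & 2 \end{pmatrix} = \begin{pmatrix} 1 & e/2 \\ 0 & 1 \end{pmatrix} N = R^{e/2}N, \]
and when $e$ is odd, the same left-hand side equals $R^{(e-1)/2}Q$ since $R^{(e-1)/2}Q = \begin{pmatrix} 1 & 1+(e-1) \\ 0 & 2 \end{pmatrix}$. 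Finally, for $PR^e$ I would compute once and for all
\[ RL = \begin{pmatrix} 2 & 1 \\ 1 & 1 \end{pmatrix}, \qquad RL\cdot R^{f} = \begin{pmatrix} 2 & 2f+1 \\ 1 & f+1 \end{pmatrix}, \]
then multiply on the right by $Q$ in the even case ($f = (e-2)/2$) and by $N$ in the odd case ($f = (e-1)/2$), obtaining in both cases $\begin{pmatrix} 2 & 2e \\ 1 & e+1 \end{pmatrix} = PR^e$.

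There is no real obstacle, only a mild bookkeeping nuisance: one must keep track of the parity case-split and of which matrix ($M$, $N$, $P$, or $Q$) appears on the right, and ensure that the two expressions $(e-2)/2$ and $(e-1)/2$ are used with the correct parity. Once the $J$-conjugation symmetry is noted at the outset, the proof collapses to three short multiplications plus their $J$-conjugates.
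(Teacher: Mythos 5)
Your proposal is correct and matches the paper's intent: the paper's proof is simply the statement that the verification is straightforward, i.e.\ direct matrix computation, which is exactly what you carry out (all ten identities check out, including the exponent bookkeeping in the $PR^e$ cases). The $J$-conjugation symmetry $JRJ=L$, $JMJ=N$, $JPJ=Q$ is a nice organizational touch that halves the work, but it does not constitute a different method.
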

\begin{proof}
The proof is straightforward.
\end{proof}

\begin{lemma}\label{lemma01}
Consider $\alpha(n,m,k) = \frac{1}{2}(\sqrt{s^2-4t} - s)$, with $k \geq (-1)^{n-1}f_{n-2}(m)+1$, we have $1 < 2\alpha(n,1,k) < 2$ and $0 < 2 \alpha(n,m,k) < 1$ for all $m \geq 2$.
\end{lemma}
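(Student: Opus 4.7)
The plan is to observe that $\alpha(n,m,k) = \tfrac{1}{2}(\sqrt{s^2-4t}-s)$ is precisely the positive root of the minimal polynomial $x^2+sx+t$ appearing in Theorem \ref{mmm}. By Theorem \ref{mmm} this root coincides with the algebraic integer in $(0,1)$ whose continued fraction expansion is $[0,\overline{m,\ldots,m,s}]$ (the other root, $\tfrac{1}{2}(-\sqrt{s^2-4t}-s)$, is negative since $s>0$). Once this identification is made, the problem is entirely transferred to the continued fraction representation and the required bounds become elementary inequalities.

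The next step is to write $\alpha = \dfrac{1}{m+\gamma}$, where $\gamma$ is the value of the tail continued fraction $[0;m,\ldots,m,s,m,\ldots]$ obtained by removing the leading partial quotient $m$. Because $\gamma$ is an infinite (purely periodic) continued fraction all of whose partial quotients are positive integers, one has $\gamma\in(0,1)$ with strict inequalities: $\gamma>0$ since the expansion is non-terminating with positive entries, and $\gamma<1$ since its leading partial quotient (either $m\geq 1$ or $s\geq 1$) is at least one while the remaining part is positive.

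With this in hand I split on $m$. For $m\geq 2$, the inequality $\gamma>0$ gives $\alpha < 1/m \leq 1/2$, and positivity of $\alpha$ is immediate; hence $0<2\alpha(n,m,k)<1$. For $m=1$, the inequality $\gamma<1$ yields $\alpha > \tfrac{1}{1+1} = \tfrac{1}{2}$, while $\gamma>0$ yields $\alpha < 1$; hence $1<2\alpha(n,1,k)<2$.

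No substantive obstacle is foreseen: the only point requiring mild care is the verification of the strict inequalities $0<\gamma<1$, which follows from standard facts about infinite continued fractions with positive integer partial quotients. The hypothesis $k\geq (-1)^{n-1}f_{n-2}(m)+1$ is used only implicitly, through Theorem \ref{mmm}, to guarantee that $s\geq 1$ so that the expansion $[0,\overline{m,\ldots,m,s}]$ is a legitimate continued fraction identifying $\alpha(n,m,k)$.
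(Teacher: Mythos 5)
Your proposal is correct, and it takes a genuinely different route from the paper. The paper works purely algebraically: it observes that $\alpha(n,m,k)<\tfrac12$ is equivalent to $2s+4t+1>0$, substitutes the Fibonacci-polynomial expressions for $s$ and $t$, and reduces the question to the sign of $f_{n+1}(m)-4f_{n-1}(m)=(m^2-3)f_{n-1}(m)+mf_{n-2}(m)$, which is positive exactly when $m\geq 2$; the case $m=1$ is handled by a separate Fibonacci computation showing $2s+4t+1<0$. You instead identify $\alpha(n,m,k)$ with the positive root of $x^2+sx+t$, hence with the value $[0,\overline{m,\ldots,m,s}]$ from Theorem \ref{mmm} (the identification is sound: the roots sum to $-s<0$, so there is only one positive root, and the continued fraction value lies in $(0,1)$), and then read off all four bounds from $\alpha=\frac{1}{m+\gamma}$ with $\gamma\in(0,1)$. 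Your argument is shorter and more conceptual, and it delivers the outer bounds $0<\alpha$ and $\alpha<1$ explicitly, which the paper's proof leaves implicit; the paper's computation, on the other hand, stays entirely within the polynomial data $(s,t)$ and does not rely on knowing which root of the minimal polynomial corresponds to the continued fraction. The only points needing care in your write-up --- strictness of $0<\gamma<1$ for an infinite continued fraction with positive integer partial quotients, and the fact that $n\geq 1$ so that at least one partial quotient $m$ precedes $s$ --- are both standard and correctly addressed.
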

\begin{proof}
We have that $\alpha(n,m,k) < \frac{1}{2}$ if and only if $2s + 4t + 1 > 0$. We can observe that
\[ 2s + 4t + 1 = 2k (f_{n+1} - 2f_n) + 2(-1)^n f_n f_{n-1} + 4 (-1)^{n-1} f_{n-1}^2 + 1 > - \cfrac{4 f_{n-1}}{f_{n+1}} + 1,  \]
where we omit the dependence on $m$ for the seek of simplicity.
Moreover, using the Cassini identity, we obtain
\[ f_{n+1}(m) - 4 f_{n-1}(m) = (m^2-3) f_{n-1}(m) + m f_{n-2}(m) \]
which is greater than 0 if and only if $m \geq 2$. 

In the case $m = 1$, we can prove that 
\[2s(n,1,k) + 4t(n,1,k) + 1 < 0.\]
Indeed, since $-k \leq (-1)^nf_{n-2}-1$, where here $f_n$'s are the Fibonacci numbers, we have
\begin{align*} 
2s(n,1,k) + 4t(n,1,k) + 1 = - 2kf_{n-2}-(-1)^n2f_{n-3}f_{n-1}+1 \cr
< 2(-1)^nf_{n-2}^2-2f_{n-2}-(-1)^n2f_{n-3}f_{n-1}+1 = -2f_{n-2}-1 < 0.
\end{align*}
\end{proof}

By Lemma \ref{lemma01}, for $m \geq 2$, we know that $0 < 2 \alpha(n,m,k) < 1$ and $\lfloor \beta(n,m,k) \rfloor = s$, since $\beta(n,m,k) = s + 2 \alpha(n,m,k)$. Thus, it is sufficient to study the continued fraction expansion of $2 \alpha(n,m,k)$ in order to know the structure of the period for the expansion of $\beta(n,m,k)$. Similarly, when $m = 1$, it will be sufficient to focus on $2 \alpha(n,1,k) - 1$, since we have $\lfloor \beta(n,1,k) \rfloor = s + 1$.

\begin{theorem} \label{thm:mne}
Given $m,n > 0$ even integers, let $\mathbf v(m) = (\frac{m}{2}, 2m, \frac{m}{2}, 2m, \ldots, \frac{m}{2}, 2m)$ be a finite sequence of length $n$ and let $\mathbf v(m)^R$ be the sequence $\mathbf v(m)$ reversed. Consider also $\mathbf u(m) = (1, 1, \frac{m-2}{2},1, 1, \frac{m-2}{2}, \ldots, 1, 1, \frac{m-2}{2})$ of length $3n$. Then,
\begin{enumerate}
\item $\beta(n,m,k) = [s, \overline{\mathbf v(m), \frac{s}{2}, \mathbf v(m)^R, 2s}]$, when $k$ is even;
\item $\beta(n,m,k) = [s, \overline{\mathbf v(m), \frac{s-1}{2}, \mathbf u(m), 1, 1, \frac{s-1}{2}, \mathbf v(m)^R, 2s}]$, when $k$ is odd and $m > 2$;
\item $\beta(n,2,k) = [s, \overline{\mathbf v(2), \frac{s-1}{2}, 1, \mathbf{2}, 1, \frac{s-1}{2}, \mathbf v(2)^R, 2s}]$, when $k$ is odd, where $\mathbf 2$ is a finite sequence of all $2$ of length $n$.
\end{enumerate}
\end{theorem}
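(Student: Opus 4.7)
The plan is to reduce the problem to computing the continued fraction expansion of $2\alpha(n,m,k)$ and then obtain it via the matrix formalism of Lemmas~\ref{lemma:RL} and \ref{MRL}. By Lemma~\ref{lemma01}, for $m \geq 2$ one has $0 < 2\alpha < 1$, so $\beta = s + 2\alpha$ with $\lfloor \beta \rfloor = s$. Moreover $\beta = \sqrt{s^2 - 4t}$ is the square root of a non-square positive integer (since $t < 0$ under the hypothesis on $k$, and $\alpha$ is irrational), so the continued fraction of $\beta$ has the standard palindromic form $[s, \overline{a_1, \ldots, a_p, 2s}]$. It therefore suffices to determine the period of $2\alpha$, which supplies the entries $a_1, \ldots, a_p$.

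Since $n$ is even, the period length $n+1$ of $\alpha = [0; \overline{m, \ldots, m, s}]$ is odd, so by Lemma~\ref{lemma:RL} the minimal $R$/$L$-block encoding $\alpha$ is
\[\mathcal{P} = R^m L^m \cdots L^m R^s L^m \cdots R^m L^s,\]
consisting of two consecutive $\alpha$-periods in alternating $R$/$L$ form. Doubling corresponds to left multiplication by $M$, and a direct check gives $MJ = JN$; hence the CF of $2\alpha$ is obtained by commuting $N$ through $\mathcal{P}^\infty$ using Lemma~\ref{MRL} and reading the exponents of the resulting $R^a$'s and $L^b$'s as CF digits.

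In case~(1), $s$ is even, so only the clean rules $NR^e = R^{e/2}N$ and $NL^e = L^{2e}N$ are needed; pushing $N$ through one $\mathcal{P}$ produces the digit block $(\mathbf v(m), s/2, \mathbf v(m)^R, 2s)$, with $N$ reappearing on the right. In case~(2), $s$ is odd, so $NR^s = R^{(s-1)/2}Q$; the matrix $Q$ then propagates through each of the $n/2$ subsequent pairs $L^m R^m$ via $QL^m = LRL^{(m-2)/2}P$ and $PR^m = RLR^{(m-2)/2}Q$, each pair contributing the length-six pattern $(1, 1, (m-2)/2, 1, 1, (m-2)/2)$; finally $QL^s = LRL^{(s-1)/2}M$ ends the block with digits $(1, 1, (s-1)/2)$ and leaves $M$. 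The next $\mathcal{P}$-block with $M$ uses the clean rules until $ML^s = L^{(s-1)/2}P$, and a third $\mathcal{P}$-block with $P$ mirrors the first via the $P \leftrightarrow Q$ alternation; after exactly three $\mathcal{P}$-blocks the matrix $N$ reappears, and the concatenated output is two copies of $(\mathbf v(m), (s-1)/2, \mathbf u(m), 1, 1, (s-1)/2, \mathbf v(m)^R, 2s)$, proving~(2).

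Case~(3) is structurally identical to case~(2), except that $(m-2)/2 = 0$ so the factors $L^0$ and $R^0$ vanish, forcing adjacent factors of the same type to combine. Tracking these collapses, the central block corresponding to $(1, \mathbf u(2), 1)$ simplifies exactly to $(1, \mathbf 2, 1)$ with $\mathbf 2$ of length $n$. The main obstacle is the bookkeeping for the three-step cycle $N \to M \to P \to N$ in cases~(2) and (3): one must verify that no additional combinations occur at the junctions between successive $\mathcal{P}$-blocks (they do not, since the $R$/$L$-alternation is preserved there) and that the collapses for $m = 2$ are correctly identified.
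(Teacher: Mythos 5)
Your proposal is correct and follows essentially the same route as the paper: encode two consecutive periods of $\alpha$ as an $R/L$-word via Lemma~\ref{lemma:RL}, push the doubling matrix through it with the transducer rules of Lemma~\ref{MRL} (tracking the carry-matrix cycle over three blocks when $s$ is odd), and handle the $m=2$ collapse of the $R^0,L^0$ factors separately. The only differences are cosmetic — you start the block with $R$ and carry $N$ where the paper starts with $L$ and carries $M$ (reconciled by your observation $MJ=JN$) — and your explicit description of the three-block cycle in case~(2) actually spells out a step the paper leaves to the reader.
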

\begin{proof}
When $k$ is even, since also $m, n$ are even, we have $s$ even. By Lemma \ref{lemma:RL}, the continued fraction expansion of $\alpha(n,m,k)$ can be represented also by the infinite product of the matrices $A$, where
\[ A = \underbrace{L^mR^m \cdots L^mR^m}_{n}L^s\underbrace{R^mL^m \cdots R^mL^m}_{n}R^s \]
By Lemma \ref{MRL}, we know that
\[MR^e = R^{2e}M, \quad ML^e = L^{e/2}M, \]
when the exponent $e$ is even. Thus,
\[ MA = \underbrace{L^{m/2}R^{2m} \cdots L^{m/2}R^{2m}}_{n}L^{s/2}\underbrace{R^{2m}L^{m/2}\cdots R^{2m}L^{m/2}}_{n}R^{2s} M = B, \]
i.e., the continued fraction expansion of $2\alpha(n,m,k)$ can be represented by the infinite product of matrices $B$ and consequently
\[\beta(n,m,k) = [s, \overline{\mathbf v(m), \frac{s}{2}, \mathbf v(m)^R, 2s}].\]
When $k$ is odd (consequently also $s$ is odd) and $m \geq 4$, for evaluating $MA$, we also need the following identities from Lemma \ref{MRL}:
\[ PR^e = RLR^{\frac{e-2}{2}}Q, \quad QL^e = LRL^{\frac{e-2}{2}}P \]
for $e$ even and
\[ ML^e = L^{\frac{e-1}{2}}P \]
for $e$ odd. Now, 
\small{
\[\begin{aligned}MA = &\underbrace{L^{m/2}R^{2m}\cdots L^{m/2}R^{2m}}_{n}L^{(s-1)/2}\underbrace{RLR^{(m-2)/2}LRL^{(m-2)/2}\cdots RLR^{(m-2)/2}LRL^{(m-2)/2}}_{3n} \cdot \\
&\cdot RLR^{(s-1)/2}N, \end{aligned}\]}
then exploiting also
\[ NL^e=L^{2e}N, \quad NR^e = R^{e/2}N \]
for $e$ even, we can evaluate $NA$ and finally obtaining
\[\beta(n,m,k) = [s, \overline{\mathbf v(m), \frac{s-1}{2}, \mathbf u(m), 1, 1, \frac{s-1}{2}, \mathbf v(m)^R, 2s}]\]
where the period has length $5n+5$.
For the final case $k$ odd and $m = 2$, exploiting
\[ PR^2L^2 = RL^2RP, \quad NL^2R^2 = L^4RN, \]
it is possible to conclude the proof.
\end{proof}

\begin{theorem} \label{thm:meno}
Given $m, n > 0$ integers, with $m$ even and $n$ odd, then
\[ \beta(n,m,k) = [s, \overline{\mathbf w(m), 2s}], \]
with $k \geq f_{n-2}(m) + 1$, where $\mathbf w(m) = (\frac{m}{2}, 2m, \frac{m}{2}, 2m, \ldots, \frac{m}{2}, 2m, \frac{m}{2})$ has length $n$.
\end{theorem}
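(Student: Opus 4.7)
The plan is to mirror the matrix-product computation used in Theorem \ref{thm:mne} (case $k$ even), specialised to $n$ odd and $m$ even. The first observation I would make is a parity check: for $m$ even the Fibonacci polynomials satisfy $f_j(m)\equiv j\pmod 2$, so $f_{n-1}(m)$ and $f_{n+1}(m)$ are both even when $n$ is odd, and therefore
\[ s = (-1)^n f_n(m)f_{n-1}(m) + k\, f_{n+1}(m) \]
is even independently of the parity of $k$. This is the key simplification compared to Theorem \ref{thm:mne}: no case split on $k$ is needed.

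Next, by Lemma \ref{lemma:RL} I would encode the continued fraction $\alpha(n,m,k) = [0, \overline{m,\ldots,m,s}]$ (period length $n+1$) by the infinite iteration of the period matrix
\[ A = (L^{m}R^{m})^{(n-1)/2}\, L^{m}R^{s}, \]
where $n$ odd places the terminal $m$ in an odd position (hence $L^m$) and the terminal $s$ in an even position (hence $R^s$).

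The core step is to push $M$ through $A$ from left to right. Because $m$ and $s$ are both even, the only identities from Lemma \ref{MRL} required are $ML^e = L^{e/2}M$ and $MR^e = R^{2e}M$ for $e$ even, so a straightforward induction on the number of pairs yields
\[ MA = B\, M, \qquad B = (L^{m/2}R^{2m})^{(n-1)/2}\, L^{m/2}R^{2s}. \]
Iterating gives $MA^j = B^j M$ for all $j$, so the continued fraction of $2\alpha(n,m,k)$ is encoded by $B^\infty$. Reading off the partial quotients of $B$ yields the period $(m/2, 2m, m/2, 2m, \ldots, m/2, 2m, m/2, 2s) = (\mathbf w(m), 2s)$, of length $n+1$, with $\mathbf w(m)$ exactly as in the statement.

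To finish, I would invoke Lemma \ref{lemma01} to ensure $0<2\alpha(n,m,k)<1$ (valid since $m\ge 2$), so that $\lfloor\beta(n,m,k)\rfloor = s$ and $\beta(n,m,k) - s = 2\alpha(n,m,k)$, giving
\[ \beta(n,m,k) = [s, \overline{\mathbf w(m), 2s}], \]
as claimed. I do not expect any real obstacle: the fact that $s$ is automatically even makes every exponent pushed through $M$ even, so the auxiliary matrices $P, Q, N$ and the case split on $k$ that complicate the proof of Theorem \ref{thm:mne} are entirely avoided. The only care required is in tracking the exponent $(n-1)/2$ of $(L^m R^m)$ (which depends on $n$ being odd) and matching the resulting matrix word back to the symmetric sequence $\mathbf w(m)$.
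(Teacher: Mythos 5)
Your proposal is correct and follows essentially the same route as the paper: encode the period via Raney's $R,L$ matrices, push $M$ through using $ML^e=L^{e/2}M$ and $MR^e=R^{2e}M$ (both exponents even since $m$ and $s$ are even), and read off the period $(\mathbf w(m),2s)$ of $2\alpha(n,m,k)$. Your explicit parity justification for $s$ being even and the careful $MA=BM$, $MA^j=B^jM$ bookkeeping are slightly more detailed than the paper's, but the argument is the same.
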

\begin{proof}
Since $m$ is even and $n$ odd, $s$ is even (independently from the parity of $k$). By Lemma \ref{lemma:RL}, the continued fraction expansion of $\alpha(n,m,k)$ can be represented also by the product $\prod A$, where
\[ A = \underbrace{L^mR^m \cdots L^mR^mL^m}_{n}R^s, \]
since $n$ is odd. Now,
\[ MA = \underbrace{L^{m/2}R^{2m} \cdots L^{m/2}R^{2m}L^{m/2}}_{n}R^{2s}, \]
i.e.,
\[2\alpha(n,m,k) = [0, \overline{\frac{m}{2}, 2m, \ldots ,\frac{m}{2}, 2m, \frac{m}{2},2s}]\]
where the period has length $n+1$ and the continued fraction expansion of $\beta(n,m,k)$ follows.
\end{proof}

\begin{remark}\label{rem:rn}\emph{
The remaining cases for the expansion of $\beta(n,m,k)$ can be proved similarly to Theorems \ref{thm:mne} and \ref{thm:meno} and the resulting expansions are slightly different. Observe that it is convenient to consider the different cases $m=1$ and $m\geq3$ odd combined with the possible values of $n$ modulo 6. This is due to the fact that the parity of $s$ and $k$ changes depending on these values for $m$ and $n$.}
\end{remark}

\begin{example}\emph{
In \cite{MC02}, the authors introduce two families of quadratic irrationals. The first are the creepers which are sets of quadratic irrationals $\{ \alpha_n \}_{n \in \mathbb N}$ such that the length of the period of the continued fraction expansion of $\alpha_n$ is $an+b$, where $a,b \in \mathbb{N}$. Using what proved far above, it is possible to construct some particular families of creepers. The following is a family of creepers for $a=4$, $b=2$ and $m$ a positive even integer
\[\{ \alpha_n = \beta(2n,m,2) \}_{n \in \mathbb N}.\]}

\emph{The second family introduced in \cite{MC02} are the sleepers: sets of quadratic irrationals $\{ \gamma_n \}_{n \in \mathbb N}$
having a continued fraction expansion whose length of the period is constant. The authors also constructed some particular families of sleepers. Further ones can be found in \cite{Mollin03, MG04}.\\ 
Thanks to the previous results, we are able to provide new families of sleepers. For instance, fixed the values of $n$ and $k$, we have that  
$\{\beta(n,2x,k): x \in \mathbb N_{\geq1}\}$ are families of sleepers, with $k$ satisfying the conditions of Theorem \ref{mmm}.\\
Let us consider $n=2$ and $k=2$, in this case $\beta(2,2x,2) = [s, \overline{x, 4x, \frac{s}{2}, 4x, x,2s}]$, for all $x \in \mathbb N_{\geq1}$ and we obtain the following sleepers:
\[ \beta(2,2,2) = 2 \sqrt{41} = [12, \overline{1,4,6,4,1,24}] \]
\[\quad \beta(2,4,2) = 2\sqrt{370} = [38, \overline{2,8,19,8,2,76}] \]
\[\quad \beta(2,6,2) = 2 \sqrt{1613} = [80, \overline{3,12,40,12,3,160}] \]
\[\beta(2,8,2) = 2 \sqrt{4778} = [138, \overline{4,16,69,16,4,276}] \]
\[ \beta(2,10,2) = 2 \sqrt{11257} = [212, \overline{5,20,106,20,5,424}], \ldots \]
\[ \beta(2,100,2) = 2 \sqrt{101022802} = [20102, \overline{50,200,10051,200,50,40204}]  , \ldots\]
We can observe that also $\beta(2,2x,2y) = [s, \overline{x, 4x, \frac{s}{2}, 4x, x,2s}]$, i.e., we can construct families of sleepers depending on two variables. Varying $n$, we obtain different lengths of the periods. In particular $\beta(n,2x,2y)$, for all $x,y \in \mathbb N_{\geq1}$, has period of length $4n +2$. 
}
\end{example}

\section{Fundamental solutions of some Pell's equations} \label{sec:pell}

In this section, we explicitly exhibit the fundamental (minimal) solution of the Pell's equations $X^2 - DY^2 = \pm 1$ in terms of Fibonacci polynomials, when $D= s^2(n,m,k)-4t(n,m,k)$. 

\begin{lemma} \label{lemma:-4}
Let $D \in \mathbb N$ be not square and $D \not \equiv 0 \pmod 4$, if $(u,v)$ is the minimal solution of the Diophantine equation $X^2 - DY^2 = -4$, then 
$$\left(\frac{u}{2}, \frac{v}{2} \right) \quad \text{or} \quad \left(\frac{1}{2}(u^3 + 3u), \frac{1}{2}(u^2+1)v\right)$$ 
is the minimal solution of the negative Pell's equation $X^2 - DY^2 = -1$.
\end{lemma}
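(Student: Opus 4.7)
The plan is to distinguish two cases according to the common parity of $u$ and $v$. Reducing the equation $u^2 - Dv^2 = -4$ modulo $4$ and using the hypothesis $D \not\equiv 0 \pmod{4}$, a short parity analysis shows that $u \equiv v \pmod{2}$: either both are even (possible for any admissible $D$) or both are odd (which forces $D \equiv 1 \pmod{4}$). These two alternatives will correspond respectively to the two candidate solutions appearing in the statement.

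In the even--even subcase, dividing through by $4$ gives $(u/2)^2 - D(v/2)^2 = -1$ at once, and minimality is automatic: any solution $(x,y)$ of $X^2 - DY^2 = -1$ with $x < u/2$ would produce a solution $(2x, 2y)$ of $X^2 - DY^2 = -4$ with $2x < u$, contradicting the minimality of $(u,v)$.

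In the odd--odd subcase, I first verify the candidate by direct computation: using $Dv^2 = u^2 + 4$ one obtains
\[(u^3+3u)^2 - Dv^2(u^2+1)^2 = u^2(u^2+3)^2 - (u^2+4)(u^2+1)^2 = -4,\]
and since $u$ is odd both $u^3 + 3u = u(u^2+3)$ and $v(u^2+1)$ are even, so dividing by $4$ yields an integer solution of $X^2 - DY^2 = -1$. For minimality I pass to $\mathbb{Q}(\sqrt{D})$ and consider $\epsilon := (u + v\sqrt{D})/2$, an algebraic integer (its minimal polynomial is $X^2 - uX - 1$) of norm $-1$ which, by the hypothesis on $(u,v)$, is the fundamental unit of norm $-1$ in the ring of integers of $\mathbb{Q}(\sqrt{D})$. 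A routine computation gives
\[\epsilon^2 = \frac{u^2+2}{2} + \frac{uv}{2}\sqrt{D}, \qquad \epsilon^3 = \frac{u^3+3u}{2} + \frac{(u^2+1)v}{2}\sqrt{D},\]
so a parity check (again using that $u$ is odd) shows $\epsilon^2 \notin \mathbb{Z}[\sqrt{D}]$ while $\epsilon^3 \in \mathbb{Z}[\sqrt{D}]$. Since every unit of $\mathbb{Z}[\sqrt{D}]$ must be of the form $\pm \epsilon^n$, the smallest positive power of $\epsilon$ lying in $\mathbb{Z}[\sqrt{D}]$ is $\epsilon^3$; it has norm $-1$ and therefore corresponds to the fundamental solution of $X^2 - DY^2 = -1$, namely $\left((u^3+3u)/2,\ (u^2+1)v/2\right)$.

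The main obstacle is precisely the minimality argument in the odd--odd case: one must translate the Diophantine minimality of $(u,v)$ into the algebraic statement that $\epsilon$ generates the norm-$(-1)$ part of the unit group of the ambient order, and then exploit the parity obstructions to show that the first positive power of $\epsilon$ landing in $\mathbb{Z}[\sqrt{D}]$ is $\epsilon^3$. Once this framework is in place, everything else reduces to the polynomial identity displayed above, the computation of $\epsilon^2$ and $\epsilon^3$, and elementary parity checks.
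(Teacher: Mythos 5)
Your proof is correct and follows essentially the same route as the paper's: both interpret $\tfrac{1}{2}(u+v\sqrt{D})$ as an element of norm $-1$ in $\mathbb{Q}(\sqrt{D})$ and pass to its cube in the odd case, using $Dv^2=u^2+4$ to simplify the coordinates. If anything, yours is the more complete argument: the paper merely asserts minimality after the parity check, whereas you justify it, trivially in the even--even case and, in the odd--odd case, by locating $\epsilon=\tfrac{1}{2}(u+v\sqrt{D})$ as the fundamental unit of the maximal order and checking that $\epsilon^2\notin\mathbb{Z}[\sqrt{D}]$ while $\epsilon^3\in\mathbb{Z}[\sqrt{D}]$.
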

\begin{proof}
Let $\mathcal N(\cdot)$ be the norm of the quadratic field $\mathbb Q(\sqrt{D})$, by hypothesis $\mathcal N(u + v \sqrt{D}) = -4$ and consequently 
$$\mathcal N\left(\frac{1}{2}(u+v\sqrt{D})\right) = -1, \quad \mathcal N\left(\frac{1}{8}(u+v\sqrt{D})^3\right) = -1.$$
Exploiting that $v^2 D = u^2 + 4$, we have
\[ \cfrac{u^3 + 3uv^2D}{8} = \cfrac{u^3 + 3u}{2} \]
and
\[ \cfrac{3 u^2 v +v^3 D}{8} = \cfrac{(u^2+1)v}{2}.\]
Thus, if $u, v$ are both even, then $\left(\frac{1}{2}u, \frac{1}{2}v \right)$ is the minimal solution of the negative Pell's equation. If $u$ is odd, then $\left(\frac{1}{2}(u^3 + 3u), \frac{1}{2}(u^2+1)v\right)$ is the minimal solution of the negative Pell's equation. Moreover, we can observe that we can not have the situation where $u$ is even and $v$ is odd, because this situation implies $D \equiv 0 \pmod 4$.
\end{proof}

\begin{theorem} \label{teo:pellneg}
Consider $D= s^2(n,m,k)-4t(n,m,k)$ with $n$ even and $k$ odd and define $T_n := f_{n+1}s+2f_n$, for all $n \geq 0$. If $(x,y)$ is the minimal solution of the negative Pell's equation $X^2 - D Y^2 = -1$, then
\begin{itemize}
\item[a)] If $m$ is even, then 
\[ x = \cfrac{(T_n^2+3)T_n}{2}, \quad y = \cfrac{(T_n^2+1)f_{n+1}}{2}. \]
\item[b)] If $m$ is odd and $n \equiv 0,4 \pmod 6$, then
\[ x = \cfrac{(T_n^2+3)T_n}{2}, \quad y = \cfrac{(T_n^2+1)f_{n+1}}{2}. \]
\item[c)] If $m$ is odd and $n \equiv 2 \pmod 6$, then
\[ x = \cfrac{T_n}{2}, \quad y = \cfrac{f_{n+1}}{2}. \]
\end{itemize}
\end{theorem}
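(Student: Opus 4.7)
The plan is to reduce the problem to the equation $X^2 - DY^2 = -4$ through Lemma \ref{lemma:-4} and to verify that $(T_n, f_{n+1}(m))$ is its minimal positive solution. First, I would show by direct computation that $T_n^2 - D f_{n+1}(m)^2 = -4$: expanding gives
\[
T_n^2 - D f_{n+1}^2 \;=\; 4\bigl(s f_n f_{n+1} + f_n^2 + t f_{n+1}^2\bigr),
\]
and substituting the expressions for $s$ and $t$ from Theorem \ref{mmm} and then twice invoking the polynomial Cassini identity $f_n^2 - f_{n-1}f_{n+1} = (-1)^{n-1}$, the $k$-dependent contributions cancel and the bracket collapses to $-1$ (using that $n$ is even).

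To establish minimality, I would recognise $\eta := (T_n + f_{n+1}\sqrt{D})/2$ as an eigenvalue of the period matrix of the purely periodic expansion $1/\alpha = [\overline{m,\ldots,m,s}]$, namely
\[
M \;=\; \begin{pmatrix} m & 1 \\ 1 & 0 \end{pmatrix}^{\!n} \begin{pmatrix} s & 1 \\ 1 & 0 \end{pmatrix} \;=\; \begin{pmatrix} s f_{n+1}+f_n & f_{n+1} \\ s f_n+f_{n-1} & f_n \end{pmatrix},
\]
which has trace $T_n$ and determinant $(-1)^{n+1} = -1$. A brief check shows that in each of the three cases $s$ is odd, so $D \equiv 1 \pmod 4$ and $\mathbb{Z}[\alpha]$ coincides with the maximal order $\mathcal{O}_{\mathbb{Q}(\sqrt{D})}$. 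Because the hypothesis on $k$ (combined with $k$ odd) makes $n+1$ the \emph{minimal} period of $1/\alpha$, the classical correspondence between reduced purely periodic quadratic irrationals and fundamental units of their orders identifies $\eta$ with the fundamental unit of $\mathcal{O}_{\mathbb{Q}(\sqrt{D})}$. Consequently $(T_n, f_{n+1})$ is the minimal positive solution of $X^2 - DY^2 = -4$.

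Finally, I would apply Lemma \ref{lemma:-4} via a parity analysis of $T_n$ and $f_{n+1}$. Using the facts that $f_j(m)$ is odd iff $j$ is odd when $m$ is even, and $f_j(m)$ is even iff $3 \mid j$ when $m$ is odd, one checks that in cases (a) and (b) one has $T_n$ odd, so the second (cube) branch of Lemma \ref{lemma:-4} yields $x = T_n(T_n^2+3)/2$ and $y = f_{n+1}(T_n^2+1)/2$; in case (c), one has $3 \mid n+1$, so $f_{n+1}$ is even and consequently $T_n$ is also even, placing us in the first branch and giving $x = T_n/2$ and $y = f_{n+1}/2$.

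The main obstacle is the minimality step: certifying that $\eta$ is the fundamental unit of $\mathcal{O}_{\mathbb{Q}(\sqrt{D})}$ and not merely a proper power of it. This relies both on the minimality of the period $n+1$ of $1/\alpha$ and on the equality $\mathbb{Z}[\alpha] = \mathcal{O}_{\mathbb{Q}(\sqrt{D})}$; the latter is precisely why the parity conditions forcing $s$ to be odd are essential, since otherwise $\mathbb{Z}[\alpha]$ would be a proper suborder and $\eta$ could a priori be a nontrivial power of the true fundamental unit.
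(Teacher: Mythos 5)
Your proposal follows essentially the same route as the paper: establish the identity $T_n^2 - D f_{n+1}^2 = -4$, take $(T_n, f_{n+1})$ as the minimal solution of $X^2-DY^2=-4$, and then pass to $X^2-DY^2=-1$ via Lemma \ref{lemma:-4} together with a parity analysis of $T_n$ and $f_{n+1}$ that coincides with the paper's in all three cases (the paper rewrites $D=(T_n^2+4)/f_{n+1}^2$ directly, you expand $T_n^2-Df_{n+1}^2$ and invoke Cassini; these are the same computation). The one place where you diverge is the minimality of $(T_n,f_{n+1})$: the paper simply asserts it after the algebraic identity, while you supply an argument via the period matrix of $1/\alpha$. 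Your key idea there is sound, but the specific justification has a flaw: $s$ odd does give $D\equiv 1\pmod 4$, yet this does not imply $\mathbb{Z}[\alpha]=\mathcal{O}_{\mathbb{Q}(\sqrt{D})}$, since $D=s^2-4t$ need not be squarefree and hence need not be a fundamental discriminant. Fortunately the maximal order is irrelevant here: solutions of $X^2-DY^2=\pm4$ correspond to units of the (possibly non-maximal) order of discriminant $D$, and the classical correspondence you invoke identifies the dominant eigenvalue of the period matrix of a reduced quadratic irrational of discriminant $D$, with primitive period, with the fundamental unit of that order, not of the maximal one. With that correction (and noting that the hypothesis on $k$ from Theorem \ref{mmm} is what guarantees the period $n+1$ is primitive), your minimality argument goes through and in fact makes explicit a step the paper leaves implicit.
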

\begin{proof}
Let us observe that supposing $n$ even the negative Pell's equation has solutions, since the length of the period of the expansion of $\sqrt{D}$ is odd. Moreover, $k$ odd is equivalent to $s$ odd and consequently $D \not \equiv 0 \pmod 4$.
Remembering that, for $n$ even, we have
\[ s(n,m,k) = f_{n+1}(m) k + f_n(m) f_{n-1}(m),\quad t(n,m,k) = -f_{n-1}^2(m) - k f_n(m), \]
we obtain
\begin{align*}
D &= \cfrac{f_{n+1}^2s^2+4f_nf_{n+1}^2k+4f_{n-1}^2f_{n+1}^2}{f_{n+1}^2} = \\
&= \cfrac{f_{n+1}^2s^2+4f_nf_{n+1}(f_{n+1}k+f_nf_{n-1})+ 4 f_{n-1}f_{n+1}(f_{n-1}f_{n+1}-f_n^2)}{f_{n+1}^2} = \\
&=\cfrac{f_{n+1}^2s^2+4f_nf_{n+1}s+4f_{n-1}f_{n+1}}{f_{n+1}^2} =\cfrac{(f_{n+1}s + 2f_n)^2+4}{f_{n+1}^2} = \cfrac{T_n^2 + 4}{f_{n+1}^2}, 
\end{align*}
from which $(T_n, f_{n+1})$ is the minimal solution of $X^2 - D Y^2 = -4$.

When $m$ is even, we have $f_h \equiv f_{h-2} \pmod 2$, thus $f_{n+1}$ and $T_n$ are odd and by Lemma \ref{lemma:-4}, the minimal solution of the negative Pell's equation is
\[ x = \cfrac{1}{2}(T_n^2+3)T_n, \quad y = \cfrac{1}{2}(T_n^2+1)f_{n+1}. \]
When $m$ is odd, we have $f_h \equiv f_{h-1} + f_{h-2} \pmod 2$ and
\[ \begin{cases} \text{$f_h$ even, when $h \equiv 0 \pmod 3$} \cr \text{$f_h$ odd, when $h \equiv 1, 2 \pmod 3$} \end{cases} \]
Thus, when $n \equiv 0 \pmod 6$, we have $f_n$ even, $f_{n+1}$ odd and $T_n$ odd. Similarly, when $n \equiv 4 \pmod 6$, we have $f_n$ odd, $f_{n+1}$ odd and $T_n$ odd. In both cases, by Lemma \ref{lemma:-4}, we conclude that the minimal solution of the negative Pell's equation is again
\[ x = \cfrac{1}{2}(T_n^2+3)T_n, \quad y = \cfrac{1}{2}(T_n^2+1)f_{n+1}. \]
Finally, when $m$ is odd and $n \equiv 2 \pmod 6$, we have $f_n$ odd, $f_{n+1}$ even and $T_n$ even, thus by Lemma \ref{lemma:-4} we obtain 
\[ x = \cfrac{T_n}{2}, \quad y = \cfrac{f_{n+1}}{2} . \]
\end{proof}

For obtaining the solutions of the Pell's equation $X^2 - D Y^2 = 1$, we still need the following lemma.

\begin{lemma} \label{lemma:4}
Let $D \in \mathbb N$ be not square, if $(u,v)$ is the minimal solution of the Diophantine equation $X^2 - DY^2 = 4$, then 
$$\left(\frac{u}{2}, \frac{v}{2} \right), \quad \left( \cfrac{2u^2-4}{4}, \cfrac{uv}{2} \right), \quad \text{or} \quad \left(\frac{1}{2}(u^3 + 3u), \frac{1}{2}(u^2+1)v\right)$$ 
is the minimal solution of the negative Pell's equation $X^2 - DY^2 = 1$.
\end{lemma}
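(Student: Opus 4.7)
The plan is to parallel the proof of Lemma \ref{lemma:-4} exactly, working in the quadratic field $\mathbb{Q}(\sqrt{D})$ with its norm $\mathcal{N}$. The hypothesis becomes $\mathcal{N}(u + v\sqrt{D}) = 4$, from which
\[ \mathcal{N}\!\left(\tfrac{1}{2}(u+v\sqrt{D})\right) = 1, \quad \mathcal{N}\!\left(\tfrac{1}{4}(u+v\sqrt{D})^2\right) = 1, \quad \mathcal{N}\!\left(\tfrac{1}{8}(u+v\sqrt{D})^3\right) = 1. \]
Expanding the squared and cubed expressions and repeatedly using the identity $Dv^2 = u^2 - 4$, a direct computation identifies the rational and $\sqrt{D}$-parts of these three elements with the three candidate pairs in the statement.

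Next I would perform a parity analysis on $(u,v)$ to decide which candidate consists of integers. Reducing $u^2 - Dv^2 = 4$ modulo $4$ immediately excludes the case $u$ odd, $v$ even (it would give $1\equiv 0\pmod 4$), leaving three possibilities: $u,v$ both even; $u$ even and $v$ odd (which forces $D \equiv 0 \pmod{4}$); and $u,v$ both odd. In the first case $\left(\tfrac{u}{2},\tfrac{v}{2}\right)$ is already integral. In the second case, $u$ being even makes $u^2 - 2$ and $uv$ both even, so $\left(\tfrac{u^2-2}{2}, \tfrac{uv}{2}\right)$ is the integer pair produced by the squared element. In the third case neither of the first two pairs is integral (since $uv$ is odd), but $u^2 - 1$ is even and $u(u^2 - 3)$ is divisible by $2$, so the cubic expression yields an integer pair.

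Regarding minimality, I would argue as in Lemma \ref{lemma:-4}: the map $\alpha \mapsto 2\alpha$ is strictly increasing on the positive reals and sends norm-$1$ solutions to norm-$4$ solutions, so minimality of $(u,v)$ for the norm-$4$ equation forces the extracted pair to be the fundamental positive solution of $X^2 - DY^2 = 1$. The main obstacle I foresee is the both-odd case: there one must check that no lower power of $\tfrac{1}{2}(u+v\sqrt{D})$ lies in $\mathbb{Z}[\sqrt{D}]$ while the cube does, so that the cube is precisely the fundamental unit in $\mathbb{Z}[\sqrt{D}]^{\times}_{>1}$. This reduces once more to the parity analysis, applied now to the square, which fails to be integral precisely because $uv$ is odd; the halved element itself fails to be integral for the same reason. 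Once this is in hand, the three cases of the statement exhaust all parity configurations and produce the minimal solution claimed.
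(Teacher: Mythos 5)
Your proposal follows essentially the same route as the paper: both extract the first, second, and third powers of $\frac{1}{2}(u+v\sqrt{D})$, observe they have norm $1$, and select the one that is integral according to the parity of $u$ and $v$; your parity discussion and your remarks on minimality are in fact more explicit than the paper's one-line justification. One point worth flagging: carrying out the cube computation with $Dv^2=u^2-4$ yields $\left(\frac{1}{2}(u^3-3u),\ \frac{1}{2}(u^2-1)v\right)$ rather than the pair $\left(\frac{1}{2}(u^3+3u),\ \frac{1}{2}(u^2+1)v\right)$ printed in the statement (which appears to have been copied from Lemma \ref{lemma:-4}, where $Dv^2=u^2+4$); your later use of $u(u^2-3)$ shows you are implicitly working with the correct signs, and the paper's subsequent theorem also uses $T_n^3-3T_n$ and $(T_n^2-1)f_{n+1}$, so this is a typo in the statement rather than a gap in your argument.
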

\begin{proof}
By hypothesis $\mathcal N(u + v \sqrt{D}) = 4$ and consequently
\[ \mathcal N \left(\cfrac{1}{2}(u+v\sqrt{D})\right) = \mathcal N \left(\cfrac{1}{4}(u+v\sqrt{D})^2\right) = \mathcal N \left(\cfrac{1}{8}(u+v\sqrt{D})^3\right) = 1. \]
Thus, depending on the parity of $u$ and $v$ the minimal solution of $X^2 - DY^2 =1$ is 
$$\left(\frac{u}{2}, \frac{v}{2} \right), \quad \left( \cfrac{2u^2-4}{4}, \cfrac{uv}{2} \right), \quad \text{or} \quad \left(\frac{1}{2}(u^3 + 3u), \frac{1}{2}(u^2+1)v\right)$$ 
where we also exploited that $Dv^2 = u^2 - 4$.
\end{proof}

\begin{theorem} The minimal solution of $X^2 - DY^2 = 1$, with $D = s^2(n,m,k)-4t(n,m,k)$ and $k \geq f_{n-2}(m) + 1$, is
\begin{itemize}
\item[a)] $(2x^2+1, 2xy)$, where $(x,y)$ as in Theorem \ref{teo:pellneg}, for $n$ even and $k$ odd
\item[b)] $\left( \cfrac{T_n}{2}, \cfrac{f_{n+1}}{2} \right)$, for $m$ even and $n$ odd or $m$ odd and $n \equiv 2 \pmod 3$ odd
\item[c)] $\left( \cfrac{T_n^3-3T_n}{2}, \cfrac{(T_n^2-1)f_{n+1}}{2} \right)$, for $m$ odd and $n \equiv 0, 1 \pmod 3$ odd.
\end{itemize}
\end{theorem}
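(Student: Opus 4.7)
The plan is to parallel — and then extend — the argument used in the proof of Theorem \ref{teo:pellneg}. The three cases in the statement split naturally according to the parity of $n$: case (a) covers $n$ even (with $k$ odd), which is precisely the setting in which Theorem \ref{teo:pellneg} produced a fundamental solution of the negative Pell equation; cases (b) and (c) cover $n$ odd, which calls for a direct attack via the auxiliary equation $X^2-DY^2=4$, with Lemma \ref{lemma:4} playing the role that Lemma \ref{lemma:-4} played in Theorem \ref{teo:pellneg}.

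\emph{The $n$-odd identity.} Mirroring the opening manipulations of the proof of Theorem \ref{teo:pellneg}, but substituting the formulas of Theorem \ref{mmm} for $n$ odd (so $s=-f_nf_{n-1}+kf_{n+1}$, $t=f_{n-1}^2-kf_n$), multiplying $D=s^2-4t$ by $f_{n+1}^2$, completing the square, and applying the Cassini-type identity $f_{n-1}f_{n+1}-f_n^2=(-1)^n=-1$, one obtains
\[ Df_{n+1}^2 \;=\; T_n^2-4. \]
Hence $(T_n,f_{n+1})$ is a solution of $X^2-DY^2=4$. Minimality of this solution can be read off the explicit period-$(n+1)$ continued fraction expansion of $\sqrt D$ established in Theorem \ref{thm:meno}: the fundamental unit of the associated quadratic order is $(T_n+f_{n+1}\sqrt D)/2$ and corresponds to the convergent at the end of one full period.

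\emph{Case (a).} Here $(x,y)$ from Theorem \ref{teo:pellneg} is a fundamental unit of norm $-1$, and its square $(x+y\sqrt D)^2=(x^2+Dy^2)+2xy\sqrt D$ has norm $+1$. Substituting $Dy^2=x^2+1$ gives exactly $(2x^2+1,2xy)$, and this is the fundamental solution of $X^2-DY^2=1$ because, whenever a norm-$(-1)$ unit exists, the norm map on the unit group of the order has kernel of index $2$. \emph{Cases (b) and (c).} Apply Lemma \ref{lemma:4} to $(u,v)=(T_n,f_{n+1})$ and select the branch according to the parities of $T_n$ and $f_{n+1}$. The parity bookkeeping reduces to the mod-$2$ behaviour of the Fibonacci polynomials: for $m$ even, $f_h(m)$ is even iff $h$ is even; for $m$ odd, $f_h(m)$ follows the classical Fibonacci parity and is even iff $3\mid h$. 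Inserting this into the formula for $s$ and into $T_n=f_{n+1}s+2f_n$, case (b) falls into the branch where both $T_n$ and $f_{n+1}$ are even, giving $(T_n/2,f_{n+1}/2)$; case (c) falls into the branch where both are odd, so the cubic branch of Lemma \ref{lemma:4}, after rewriting with the identity $Df_{n+1}^2=T_n^2-4$, produces exactly $((T_n^3-3T_n)/2,(T_n^2-1)f_{n+1}/2)$.

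\emph{Main obstacle.} I expect two sticky points. First, verifying that $(T_n,f_{n+1})$ is the \emph{minimal} solution of $X^2-DY^2=4$ — not merely a solution — requires the link back to the period of $\sqrt D$ from Theorem \ref{thm:meno}; one must ensure that that period equals $n+1$ for every admissible $k$, ruling out that some earlier convergent already produces a smaller solution. Second, the parity analysis in case (c), for $m$ odd with $n\equiv 0,1\pmod 3$, is delicate because the parity of $s$, hence of $T_n$, depends on that of $k$; care is needed to confirm that the cubic branch of Lemma \ref{lemma:4} does produce integer coordinates (and is the minimal solution) under the sole hypothesis $k\ge f_{n-2}(m)+1$.
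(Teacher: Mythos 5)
Your proposal follows essentially the same route as the paper: case (a) by squaring the norm-$(-1)$ fundamental solution from Theorem \ref{teo:pellneg}; for $n$ odd, deriving $Df_{n+1}^2=T_n^2-4$ exactly as in the proof of Theorem \ref{teo:pellneg} (with sign $(-1)^n=-1$), so that $(T_n,f_{n+1})$ solves $X^2-DY^2=4$, and then selecting the appropriate branch of Lemma \ref{lemma:4} by the mod-$2$ behaviour of the Fibonacci polynomials. Your parity bookkeeping for cases (b) and (c) matches the paper's.

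The ``sticky point'' you flag in case (c) is not merely delicate --- it is a genuine gap, and in fact an error in the paper's own argument, which simply asserts that for $m$ odd and $n\equiv 0,1\pmod 3$ odd ``$T_n$ and $f_{n+1}$ are both odd.'' As you observe, in this case $f_{n+1}$ is odd but $T_n\equiv f_{n+1}s\equiv s\equiv k\pmod 2$, so $T_n$ is odd only when $k$ is odd. For $k$ even the cubic branch does not even yield integers: with $m=1$, $n=3$, $k=2$ one gets $s=4$, $t=-3$, $D=28$, $T_3=16$, $f_4=3$, and the claimed solution $\bigl((T_3^3-3T_3)/2,(T_3^2-1)f_4/2\bigr)=(2024,\,765/2)$, whereas the true minimal solution of $X^2-28Y^2=1$ is $(127,24)$, which is the \emph{square} branch $\bigl((T_n^2-2)/2,\,T_nf_{n+1}/2\bigr)$ of Lemma \ref{lemma:4}. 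So case (c) as stated requires the additional hypothesis that $k$ be odd (or a fourth case for $k$ even using the square branch); your proof cannot close this step ``under the sole hypothesis $k\ge f_{n-2}(m)+1$'' because the statement is false there. One further point you raise --- minimality of $(T_n,f_{n+1})$ for $X^2-DY^2=4$ --- is asserted without proof in the paper as well; your suggestion to extract it from the period-$(n+1)$ expansion of Theorem \ref{thm:meno} is a reasonable way to supply what the paper omits.
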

\begin{proof}
When $n$ is even, we deduce the minimal solution of the Pell's equation by Theorem \ref{teo:pellneg}.
 
Let us consider now $n$ odd. In this case, we obtain 
\[ D = \cfrac{T_n^2-4}{f_{n+1}^2}, \]
from which $(T_n, f_{n+1})$ minimal solution of $X^2- DY^2 = 4$.
Thus, for $m$ even or $m$ odd and $n \equiv 2 \pmod 3$, we can observe that $T_n$ and $f_{n+1}$ are both even and by Lemma \ref{lemma:4} we conclude.

When  $m$ odd and $n \equiv 0, 1 \pmod 3$, we can observe that $T_n$ and $f_{n+1}$ are both odd, but $T_n^3-3T_n$ and $(T_n^2-1)f_{n+1}$ are even, so that we obtain the minimal solution of the Pell's equation applying Lemma \ref{lemma:4}.
\end{proof}

\section*{Acknowledgments}
The first and third authors are members of GNSAGA of INdAM.
The third author acknowledges support from Ripple’s University
Blockchain Research Initiative.
The fourth author is partially supported by project SERICS (PE00000014 - https://serics.eu) under the MUR National Recovery and Resilience Plan funded by European Union - NextGenerationEu.


\begin{thebibliography}{}

\bibitem{Das20} S. Das, D. Chakraborty, A. Saikia, On the period of the continued fraction of $\sqrt{pq}$, Acta Arithmetica \textbf{196}, (2020), 291--302.

\bibitem{FR88} C. Friesen, On continued fractions of given period, Proceedings of the American Mathematical Society \textbf{103}, (1988), 9--14.

\bibitem{GK23} F. Gawron, T. Kobos, On length of the period of the continued fraction of $n \sqrt{d}$, International Journal of Number Theory \textbf{19}, (2023), 2089--2099.

\bibitem{Koch} F. Halter-Koch, Continued fractions of given symmetric period, Fibonacci Quarterly \textbf{29}, (1991), 298--303.

\bibitem{KM23} V. Kala, P. Miska, On continued fraction partial quotients of square roots of primes, Journal of Number Theory \textbf{253}, (2023), 215--234.

\bibitem{Kaw1} F. Kawamoto, K. Tomita, Continued fractions with even period and an infinite family of real quadratic fields of minimal type, Osaka Journal of Mathematics \textbf{46}, (2009), 949--993.

\bibitem{Kaw2} F. Kawamoto, K. Tomita, Continued fractions and certain real quadratic fields of minimal type, Journal of the Mathematical Society of Japan \textbf{60}, (2008), 865--903.

\bibitem{LA03} J. Mc Laughlin, Multi-variable polynomial solutions to Pell's equation and fundamental units in real quadratic fields, Pacific Journal of Mathematics \textbf{210}, (2003), 335--349.

\bibitem{Mollin03} R. A. Mollin, Infinite families of Pellian polynomials and their continued fraction expansions, Results in Mathematics \textbf{43}, (2003), 300--317. 

\bibitem{Mollin03b} R. A. Mollin, Construction of families of long continued fractions revisited, Acta Mathematica Academiae Paedagogicae Nyiregyhàziensis \textbf{19}, (2003), 175--181.

\bibitem{MC02} R. A. Mollin, K. Cheng, Beepers, creepers, and sleepers, International Journal of Mathematics \textbf{2}, (2002), 951--956.

\bibitem{MG04} R. A. Mollin, B. Goddard, A description of continued fraction expansions of quadratic surds represented by polynomials, Journal of Number Theory \textbf{107}, (2004), 228--240.

\bibitem{Ple14} V. Pletser, On continued fraction development of quadratic irrationals having all periodic terms but last equal and associated general solutions of the Pell equation, Journal of Number Theory \textbf{136} (2014), 339--353.

\bibitem{RS20} H. Rada, S. Starosta, Bounds on the period of the continued fraction after a Mobius transformation, Journal of Number Theory \textbf{212}, (2020), 122--172

\bibitem{Ran73} G. N. Raney, On continued fractions and finite automata, Mathematische Annalen \textbf{206} (1973), 265--284.

\bibitem{Tom} K. Tomita, Explicit Representation of Fundamental Units of
Some Real Quadratic Fields, II, Journal of Number Theory \textbf{63}, (1997), 275--285.

\bibitem{Shall} J. P. Allouche, J. Shallit, Automatic sequences: theory, applications, generalizations, Cambridge university press, (2003), 212--213.


\end{thebibliography}
\end{document}